\newtheorem{defn}{Definition}
\newtheorem{thm}{Theorem}
\title{
Local asymptotic normality property for fractional Gaussian noise under high-frequency observations
}
\author{Alexandre Brouste$^\ast$ and Masaaki Fukasawa$^\dagger$ \\
{\small
$^\ast$ D\'epartement de Math\'ematiques, Universit\'e du Maine }\\
 {\small $^\dagger$ Graduate School of Engineering Science, Osaka University}
 \\
 {\small 1-3 Machikaneyama, Toyonaka, Osaka, 560-8531 JAPAN}\\
 {\small  fukasawa@sigmath.es.osaka-u.ac.jp}
}
\begin{document}
\maketitle

\begin{abstract}
 Local Asymptotic Normality (LAN) property for fractional Gaussian noise under high-frequency  observations is proved with a non-diagonal rate matrix  depending on the parameter to be estimated.
In contrast to the LAN families in the literature,  non-diagonal rate matrices are inevitable.
\end{abstract}

\section{Introduction}
The theory of Local Asymptotic Normality (LAN) provides a powerful framework under which we are able to discuss asymptotic optimality of estimators.
When the LAN property holds true for a statistical experiment, minimax theorems \cite{Hajek,LeCam} can be applied and a lower bound for the variance of the estimators can be derived. Beyond the classical IID setting \cite{IH}, the LAN property (or Local Asymptotic Mixed Normality property) has been proved for various statistical models including  linear processes \cite{KT},  ergodic Markov chains \cite{Roussas}, ergodic diffusions \cite{Gobet1,Kutoyants}, diffusions under high-frequency observations \cite{Gobet2} and several L\'evy process models \cite{CG, Kawai2, KM}.
The LAN property for fractional Gaussian noise (fGn) was obtained in \cite{Cohen} under the large sample observation scheme.  In this setting, Maximum Likelihood (ML) and Whittle sequences of estimators achieve optimality \cite{Dahlhaus, Dahlhaus2, Lieberman}.

The statistical experiment of observing fGn under high-frequency scheme has not been well understood in the literature, despite that high-frequency data has attracted much attention recently due to their increasing availability. At high-frequency, scaling effects from the variance and from  the self-similarity of the fGn are melting.  The singularity of the joint estimation of diffusion coefficient and Hurst parameter was already noticed in~\cite{Berzin,Istas}. 
A weak LAN property with a singular Fisher matrix was obtained in \cite{Kawai}.
Due to this singularity, no minimax theorem can be applied and in particular,
it has been unclear whether the ML estimator enjoys any kind of optimality property.

In this paper, we prove that the  statistical experiment in fact enjoys the LAN property.
To discuss the difference from \cite{Kawai},
let us be more precise in the definition of the LAN property.

 \begin{defn}\label{def:LAN}
  Let $\Theta \subset \mathbb{R}^d$.
 A family of measures $\{P^n_\theta, \theta \in \Theta\}$
 is called locally asymptotically normal (LAN) at $\theta_0 \in \Theta$ if there exist  {\bf nondegenerate} $d\times d$ matrices $\varphi_n(\theta_0)$ and $I(\theta_0)$ such that for any $u \in \mathbb{R}^d$,
 the likelihood ratio
\begin{equation*}
Z_n(u) = \frac{\mathrm{d}P^n_{\theta_0 + \varphi_n(\theta_0)u}}{\mathrm{d}P^n_{\theta_0}}
\end{equation*}
 admits the representation
 \begin{equation}\label{eq:1}
  Z_n(u) = \exp\left(
		\langle u , \zeta_n(\theta_0)\rangle - \frac{1}{2}\langle I(\theta_0)u,u\rangle + r_n(\theta_0,u)\right),
 \end{equation}
 where
 \begin{equation}\label{eq:2}
  \zeta_n(\theta_0) \to \mathcal{N}(0, I(\theta_0)), \ \   r_n(\theta_0,u) \to 0
 \end{equation}
 in law under $P^n_{\theta_0}$.
 \end{defn}
 This definition of the LAN property
 is equivalent to the one given in \cite{IH}.
 The matrix $\varphi_n(\theta_0)$ is often called the rate matrix.
Remark that the  nondegeneracy of $\varphi_n(\theta_0)$ and $I(\theta_0)$ is essential in the following minimax theorem  due to Hajek~\cite{Hajek} and Le Cam~\cite{LeCam}, which implies in particular the asymptotic efficiency of the ML sequence of estimators in regular models.

 \begin{thm}\label{thm:minimax}
  Let the family of measures
  $\{P^n_\theta, \theta \in \Theta\}$, $\Theta \subset \mathbb{R}^d$,
  be LAN at $\theta_0 \in \Theta$ for nondegenerate matrices $\varphi_n(\theta_0)$ and $I(\theta_0)$. Then, for any sequence of estimators $\hat{\theta}_n$,
  \begin{equation*}
   \liminf_{\delta \to 0}
    \liminf_{n\to \infty}
    \sup_{|\theta- \theta_0| < \delta}
    E_\theta\left[
l\left( \varphi_n(\theta_0)^{-1}(\hat{\theta}_n - \theta)\right)
			\right] \geq
    \int_{\mathbb{R}^d}l\left(I(\theta_0)^{-1/2}z\right)\phi(z)\mathrm{d}z
  \end{equation*}
  for any symmetric, nonnegative quasi-convex function $l$ with
  $\lim_{|z|\to \infty} e^{-\epsilon |z|^2}l(z) = 0$ for all $\epsilon > 0$,
  where $\phi$ is the density of the $d$-dimensional standard normal distribution.
 \end{thm}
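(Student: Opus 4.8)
The statement is the classical H\'ajek--Le Cam asymptotic minimax bound, and the plan is to prove it by reducing the localized sequence of experiments to its Gaussian shift limit and invoking Anderson's lemma there. Write $u=\varphi_n(\theta_0)^{-1}(\theta-\theta_0)$ and $T_n=\varphi_n(\theta_0)^{-1}(\hat{\theta}_n-\theta_0)$; since $\varphi_n(\theta_0)\to 0$, for every compact $K\subset\mathbb{R}^d$ one has $\{\theta_0+\varphi_n(\theta_0)u:u\in K\}\subset\{|\theta-\theta_0|<\delta\}$ once $n$ is large, so it suffices to prove
\[
\liminf_{n\to\infty}\ \sup_{u\in K}\ E_{P^n_{\theta_0+\varphi_n(\theta_0)u}}[l(T_n-u)]\ \geq\ \int_{\mathbb{R}^d}l(I(\theta_0)^{-1/2}z)\,\phi(z)\,\mathrm{d}z
\]
for each $K$ and then let $K\uparrow\mathbb{R}^d$ (the outer $\liminf_{\delta\to 0}$ then comes for free, the right-hand side not depending on $\delta$). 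Moreover, replacing $l$ by the truncation $l_c:=\min(l,c)$ --- still symmetric, nonnegative, quasi-convex, now bounded, with $l_c\uparrow l$ --- reduces everything to a bounded loss; the level $c$ is sent to $+\infty$ only at the end by monotone convergence, and the growth hypothesis on $l$ serves only to ensure finiteness of the right-hand side.

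The representation \eqref{eq:1}--\eqref{eq:2} says precisely that, under $P^n_{\theta_0}$, the finite-dimensional distributions of the log-likelihood process $u\mapsto\log Z_n(u)$ converge to those of $u\mapsto\langle u,\zeta\rangle-\tfrac12\langle I(\theta_0)u,u\rangle$ with $\zeta\sim\mathcal{N}(0,I(\theta_0))$, hence the localized experiments converge weakly, in Le Cam's sense, to the Gaussian shift experiment $\mathcal{E}=\{Q_u\}_u$ in which one observes $X\sim\mathcal{N}(u,I(\theta_0)^{-1})$ (equivalently the canonical statistic $\zeta=I(\theta_0)X$). To transfer the bound, fix $K$ and a subsequence realizing the liminf, let $\pi$ be any probability measure supported on $K$ and $c>0$, and bound
\[
\sup_{u\in K}E_{P^n_{\theta_0+\varphi_n u}}[l(T_n-u)]\ \geq\ \int_K E_{P^n_{\theta_0+\varphi_n u}}[l_c(T_n-u)]\,\pi(\mathrm{d}u)\ =\ E_{P^n_{\theta_0}}\!\left[\int_K l_c(T_n-u)Z_n(u)\,\pi(\mathrm{d}u)\right].
\]
Along a further subsequence $(T_n,\zeta_n)$ converges in law under $P^n_{\theta_0}$ to some $(S,\zeta)$ with $\zeta\sim\mathcal{N}(0,I(\theta_0))$ (if $T_n$ is tight along no subsequence the liminf is $+\infty$ by contiguity and there is nothing to prove); since $Z_n(u)$ is, up to the vanishing $r_n$, a continuous functional of $\zeta_n$ and $E_{P^n_{\theta_0}}[Z_n(u)]=1$ yields uniform integrability, Fatou's lemma and the portmanteau theorem give
\[
\liminf_{n\to\infty}\sup_{u\in K}E_{P^n_{\theta_0+\varphi_n u}}[l(T_n-u)]\ \geq\ \int_K E_{Q_u}[l_c(S-u)]\,\pi(\mathrm{d}u)\ \geq\ \inf_{S'}\int_K E_{Q_u}[l_c(S'-u)]\,\pi(\mathrm{d}u),
\]
the last being the Bayes risk of the loss $l_c$ and prior $\pi$ in $\mathcal{E}$. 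Choosing $\pi$ to be a centred Gaussian restricted to $K$ and letting its variance and $K$ diverge, this Bayes risk increases to the minimax risk of $\mathcal{E}$, which by Anderson's lemma --- the equivariant estimator $X$ being optimal for the bowl-shaped loss $l_c$ --- equals $E_{Q_0}[l_c(X)]=\int l_c(I(\theta_0)^{-1/2}z)\phi(z)\,\mathrm{d}z$; letting $c\to\infty$ finishes the proof.

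The only genuinely delicate point is the transfer step: the localized estimator sequence $T_n$ is arbitrary and need not be tight, and one must push it to an estimator in the limit experiment uniformly over a parameter region that is ultimately all of $\mathbb{R}^d$ and against an a priori unbounded loss. This is exactly the content of Le Cam's asymptotic representation theorem; in the hands-on Bayesian route sketched above it is handled by the joint use of the truncation $l_c$, a spreading Gaussian prior, and contiguity. The remaining ingredients --- the convergence to the Gaussian shift, and the minimax computation there via Anderson's lemma and a conjugate-prior Bayes calculation --- are routine.
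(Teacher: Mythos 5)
The paper does not prove Theorem~\ref{thm:minimax} at all: it is quoted as the classical H\'ajek--Le Cam local asymptotic minimax theorem with references to \cite{Hajek} and \cite{LeCam} (see also \cite{IH} and \cite{VdV}). Your sketch is precisely the standard argument from those sources --- localization, passage to the Gaussian shift limit experiment, a Bayes-risk lower bound with a spreading (conjugate Gaussian) prior, and Anderson's lemma --- so the architecture is the right one and there is nothing to compare it against inside the paper.

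Three points in your write-up are not quite right as stated, though all are repairable. First, you use $\varphi_n(\theta_0)\to 0$ to fit the localized parameters inside $\{|\theta-\theta_0|<\delta\}$, but Definition~\ref{def:LAN} as written does not require it; you should either add it as a hypothesis or note that it is part of the definition in \cite{IH} to which the paper appeals. Second, the dismissal of non-tight $T_n$ (``the liminf is $+\infty$ by contiguity'') is false in general: the loss $l$ may be bounded (e.g.\ $l=1-\mathbf{1}_{[-1,1]^d}$ is symmetric, nonnegative and quasi-convex), in which case escape of mass to infinity does not blow up the risk. The correct treatment is to extract a vaguely convergent subsequence of the laws of $(T_n,\zeta_n)$ on the compactification $[-\infty,\infty]^d\times\mathbb{R}^d$, allow the limit ``estimator'' $S$ to be defective, and observe that mass at infinity can only increase the limiting risk since $\liminf_{|x|\to\infty} l(x)\geq l(x_0)$ for bowl-shaped $l$; this is exactly how \cite{VdV} (Theorem~8.11) handles it. Third, $E_{P^n_{\theta_0}}[Z_n(u)]\leq 1$ does not by itself give uniform integrability of $Z_n(u)$; fortunately you do not need it, because you only want a lower bound, and the map $(t,z)\mapsto l_c(t-u)z$ is nonnegative and lower semicontinuous, so the portmanteau theorem for l.s.c.\ functions bounded below already yields $\liminf_n E[l_c(T_n-u)Z_n(u)]\geq E[l_c(S-u)Z(u)]$ (alternatively, use that the limit $Z(u)=\exp(\langle u,\zeta\rangle-\tfrac12\langle I(\theta_0)u,u\rangle)$ has mean exactly $1$ and apply the Scheff\'e-type Lemma~2.20 of \cite{VdV}). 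With these repairs the proof is complete and coincides with the classical one.
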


 For the fGn under high-frequency observations, we consider the estimation of the parameters $(H,\sigma)  \in \Theta = (0,1) \times (0,\infty)$, where $H$ is the 
 Hurst parameter and $\sigma$ is the diffusion coefficient.
 It was shown in  \cite{Kawai} that the family  of measures $\{P^n_{(H,\sigma)},(H,\sigma)  \in \Theta\}$
 satisfies both conditions \eqref{eq:1} and \eqref{eq:2} at any $(H,\sigma) \in \Theta$ for
 \begin{equation*}
  \varphi_n(H,\sigma) =
     \begin{pmatrix}
      \frac{1}{\sqrt{n}\log \Delta_n} & 0 \\ 0 & \frac{1}{\sqrt{n}}
       \end{pmatrix} \quad \mbox{and} \quad
		    I(H,\sigma) = \begin{pmatrix}
2 & 2/\sigma
				 \\ 2/\sigma & 2/\sigma^2
				\end{pmatrix},
 \end{equation*}
where $n$ is the sample size and
 $\Delta_n$ is the length of sampling intervals.
 Note that $I(H,\sigma)$ is singular. On the one hand, this result does not imply that the family is LAN in the sense of Definition~\ref{def:LAN}. On the second hand, Theorem~\ref{thm:minimax} can not be applied with this result.

 To the best of our knowledge, for any LAN family in the literature, it is always possible to take
 a diagonal rate matrix.
 A typical example is the LAN property for the IID setting where 
 $$\varphi_n(\theta_0) = \frac{1}{\sqrt{n}} \mathbf{I}_d.$$ 
 Here  $\mathbf{I}_d$ is the $d\times d$ identity matrix.
%
However,  both in Definition~\ref{def:LAN} and Theorem~\ref{thm:minimax}, $\varphi_n(\theta_0)$ is required only to be nondegenerate.
 In this paper, we prove the LAN property for the statistical experiment of observing fGn under high-frequency scheme for a certain class of
 {\bf non-diagonal} matrices $\varphi_n(\theta_0)$ depending on $\theta_0$.
 In particular, Theorem~\ref{thm:minimax} can be applied.
Non-diagonal rate matrices are inevitable because the Fisher matrix is singular.
 
 Basics for the fractional Brownian motion and the fGn are recalled in Section 2. The statistical experiment under high-frequency scheme is described and the LAN property result is proved in Section~3.  Efficient rates for the estimation of $H$ and $\sigma$ are given in Section 4 giving the optimality of the ML sequence of estimators. 
 
\section{Fractional Brownian motion and fractional Gaussian noise}

Here we briefly review the basics of the fractional Brownian motion, fractional Gaussian noise and their large sample theory. A centered Gaussian process $B^H$ is called a fractional Brownian motion with Hurst parameter $H$ if
\begin{equation*}
 E[B^H_t B^H_s] = \frac{1}{2}(|t|^{2H} + |s|^{2H}- |t-s|^{2H})
\end{equation*}
for all $t, s \in \mathbb{R}$.
 Such a process exists and is continuous for any $H \in (0,1)$ by Kolmogorov's extension and continuity theorems.
 The process enjoys a self-similarity property : for any $\Delta > 0$ and $t \in \mathbb{R}$,
 \begin{equation*}
 B^H_{t + \Delta} - B^H_t \sim \Delta^H B^H_1
 \end{equation*}
 in law. The spectral density of regular increments, that is, the function $f_H$ characterized by
 \begin{eqnarray*}
  E[B^H_{1}(B^H_{k+1} - B^H_{k})] &=&  \frac12 \left( |k+1|^{2H}-2 |k|^{2H} + |k-1|^{2H}\right) \\
   &=& \frac{1}{2\pi}\int_{-\pi}^{\pi} e^{\sqrt{-1}k\lambda}f_H(\lambda)\mathrm{d}\lambda, \ \ k \in \mathbb{Z}
 \end{eqnarray*}
 is given by
 \begin{equation*}
  f_H(\lambda) =  C_H 2 (1-\cos(\lambda)) \sum_{k \in \mathbb{Z}}  | \lambda + 2k \pi |^{-2H-1}  \quad  \mbox{with} \quad C_H= \frac{\Gamma(2H+1) \sin (\pi H)}{2\pi} .
 \end{equation*}
 For a fixed interval $\Delta > 0$, assume we observe the increments
 \begin{equation*}
X_n =
  \left(
   \sigma B^H_{\Delta}, 
   \sigma B^H_{2\Delta} - \sigma B^H_{\Delta},
   \sigma B^H_{3\Delta} - \sigma B^H_{2\Delta}, \dots,
      \sigma B^H_{n\Delta} - \sigma B^H_{(n-1)\Delta}
  \right),
 \end{equation*}
 where $(H,\sigma)$ is unknown. The random vector $X_n$ is called fractional Gaussian noise (fGn).
 Denote by  $T_n(H)$ the covariance matrix of
\begin{equation*}
 \frac{X_n}{\sigma \Delta^H},
\end{equation*}
of which the distribution does not depend on $\sigma$ and $\Delta$  by the self-similarity property.
The $(i,j)$ element of $T_n(H)$ is given by
\begin{equation*}
 \frac{1}{2\pi}\int_{-\pi}^{\pi} e^{\sqrt{-1}(i-j)\lambda}f_H(\lambda)\mathrm{d}\lambda.
\end{equation*}
Let us suppose $\Delta = 1$ for brevity. The log-likelihood ratio is then given by
\begin{equation*}
 \begin{split}
 \ell_n(H,\sigma) = &
  -\frac{n}{2}\log(2\pi) - n\log \sigma \\
  & - \frac{1}{2}\log | T_n(H) | - \frac{1}{2\sigma^2}\langle
  X_n,T_n(H)^{-1} X_n \rangle.
 \end{split}
\end{equation*}
Let
\begin{equation*}
 \begin{split}
 &A_n = \sqrt{n}
  \left\{
\frac{1}{n\sigma^2}\langle
  X_n,T_n(H)^{-1} X_n \rangle -1
		\right\}, \\
  &  B_n =  \frac{1}{\sqrt{n}} \left\{
  \frac{1}{2} \partial_H
  \log | T_n(H) | +
 \frac{1}{2\ \sigma^2}\langle
  X_n,\partial_H\{T_n(H)^{-1}\} X_n \rangle\right\}.
 \end{split}
\end{equation*}
The score function is then given by
\begin{equation*}
 \nabla  \ell_n = 
  \begin{pmatrix}
   \partial_H  \ell_n \\
   \partial_\sigma  \ell_n 
  \end{pmatrix}
     = \begin{pmatrix}
	 - B_n \sqrt{n}  \\
     A_n \sqrt{n} \sigma^{-1}
       \end{pmatrix}.
\end{equation*}
Let $P^n_{(H,\sigma)}$ be the measure on $\mathbb{R}^n$ induced by $X_n$.

\begin{thm}
 The family of measures $\left\{P^n_{(H,\sigma)}; (H,\sigma) \in (0,1)\times (0,\infty)\right\}$ is LAN at any $(H,\sigma) \in (0,1)\times (0,\infty)$ with
  \begin{equation*}
  		   \varphi_n(H,\sigma) = \begin{pmatrix}
\frac{1}{\sqrt{n}} & 0
							\\
		0 & \frac{1}{\sqrt{n}}	     
		 \end{pmatrix}
		 \end{equation*}
		 and
\begin{equation*}I(H,\sigma) =
		 \begin{pmatrix}
		  \frac{1}{4\pi} \int_{-\pi}^{\pi} |\partial_H \log f_H(\lambda)|^2 \mathrm{d}\lambda &
		  \frac{1}{2\pi\sigma} \int_{-\pi}^{\pi} \partial_H\log f_H(\lambda)\mathrm{d}\lambda \\
		  \frac{1}{2\pi\sigma} \int_{-\pi}^{\pi} \partial_H\log f_H(\lambda)\mathrm{d}\lambda &
		 \frac{2}{\sigma^2}
		 \end{pmatrix}.   
  \end{equation*}
 In particular, $(A_n, B_n)$ converges in law to a centered normal distribution with covariance
 \begin{equation} \label{AB}
J(H) =   		 \begin{pmatrix}
		 2 &
		-  \frac{1}{2\pi} \int_{-\pi}^{\pi} \partial_H\log f_H(\lambda)\mathrm{d}\lambda \\
		-  \frac{1}{2\pi} \int_{-\pi}^{\pi} \partial_H\log f_H(\lambda)\mathrm{d}\lambda &
		  \frac{1}{4\pi} \int_{-\pi}^{\pi} |\partial_H \log f_H(\lambda)|^2 \mathrm{d}\lambda
		 \end{pmatrix}.   
 \end{equation}
\end{thm}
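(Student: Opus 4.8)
The plan is to run the classical second-order expansion of a Gaussian log-likelihood ratio and to reduce everything to Szeg\H{o}-type asymptotics for traces of products of Toeplitz matrices and of their inverses, in the spirit of \cite{Cohen,Dahlhaus,Lieberman}. \emph{Reduction.} Fix $\theta_0=(H,\sigma)$ and $u\in\mathbb{R}^2$. Since $\varphi_n=n^{-1/2}\mathbf{I}_2$ is a constant matrix and $\ell_n(\,\cdot\,;x)$ is $C^2$ in $\theta$ on a neighbourhood of $\theta_0$, a Lagrange expansion gives $\log Z_n(u)=\langle \varphi_n u,\nabla\ell_n(\theta_0)\rangle+\tfrac12\langle \varphi_n\nabla^2\ell_n(\theta^\ast)\varphi_n u,u\rangle$ for some $\theta^\ast$ on the segment $[\theta_0,\theta_0+\varphi_n u]$. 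The first term is $\langle u,\zeta_n\rangle$ with $\zeta_n:=\varphi_n\nabla\ell_n(\theta_0)=(-B_n,A_n/\sigma)^\top$. So it suffices to prove: (i) $\zeta_n\to\mathcal{N}(0,I(\theta_0))$ in $P^n_{\theta_0}$-law; and (ii) for each $C>0$, $\sup_{|\theta-\theta_0|\le C/\sqrt{n}}\bigl\|\varphi_n\nabla^2\ell_n(\theta)\varphi_n+I(\theta_0)\bigr\|\to 0$ in $P^n_{\theta_0}$-probability. Then $r_n(\theta_0,u):=\tfrac12\langle(\varphi_n\nabla^2\ell_n(\theta^\ast)\varphi_n+I(\theta_0))u,u\rangle\to 0$ and \eqref{eq:1}--\eqref{eq:2} hold. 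Nondegeneracy of $\varphi_n$ is immediate; nondegeneracy of $I(\theta_0)$ follows since $\det I(\theta_0)=\sigma^{-2}\bigl(\tfrac{1}{2\pi}\int_{-\pi}^{\pi}|\partial_H\log f_H|^2-(\tfrac{1}{2\pi}\int_{-\pi}^{\pi}\partial_H\log f_H)^2\bigr)>0$, the Cauchy--Schwarz inequality being strict because $\lambda\mapsto\partial_H\log f_H(\lambda)$ is not a.e.\ constant.

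\emph{The central limit theorem.} Under $P^n_{\theta_0}$, write $X_n=\sigma T_n(H)^{1/2}\xi$ with $\xi\sim\mathcal{N}(0,\mathbf{I}_n)$. Using $\partial_H T_n(H)=T_n(\partial_H f_H)$, $\partial_H\log|T_n(H)|=\operatorname{tr}(T_n(H)^{-1}T_n(\partial_H f_H))$ and $\partial_H(T_n(H)^{-1})=-T_n(H)^{-1}T_n(\partial_H f_H)T_n(H)^{-1}$, one finds $A_n=n^{-1/2}(\|\xi\|^2-n)$ and $B_n=-(2\sqrt{n})^{-1}(\langle\xi,M_n\xi\rangle-\operatorname{tr}M_n)$, where $M_n:=T_n(H)^{-1/2}T_n(\partial_H f_H)T_n(H)^{-1/2}$ is symmetric. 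Diagonalising $M_n=O_nD_nO_n^\top$ with $D_n=\operatorname{diag}(\lambda^{(n)}_1,\dots,\lambda^{(n)}_n)$ and setting $\eta=O_n^\top\xi\sim\mathcal{N}(0,\mathbf{I}_n)$, we get $(A_n,B_n)=\sum_{i=1}^n(\eta_i^2-1)\,v^{(n)}_i$ with $v^{(n)}_i=(n^{-1/2},-\lambda^{(n)}_i/(2\sqrt{n}))$, a sum of independent centred $\mathbb{R}^2$-vectors. Lindeberg's condition holds because $\max_i\|v^{(n)}_i\|^2\le n^{-1}(1+\tfrac14\|M_n\|_{\mathrm{op}}^2)\to 0$, while $\sum_i v^{(n)}_i(v^{(n)}_i)^\top\operatorname{Var}(\eta_i^2-1)$ has entries $2$, $-n^{-1}\operatorname{tr}M_n$ and $(2n)^{-1}\operatorname{tr}(M_n^2)$; since $n^{-1}\operatorname{tr}M_n\to\tfrac{1}{2\pi}\int_{-\pi}^{\pi}\partial_H\log f_H$ and $n^{-1}\operatorname{tr}(M_n^2)\to\tfrac{1}{2\pi}\int_{-\pi}^{\pi}(\partial_H\log f_H)^2$, this matrix tends to $J(H)$. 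Hence $(A_n,B_n)\to\mathcal{N}(0,J(H))$, and applying the linear map $(x,y)\mapsto(-y,x/\sigma)$ yields $\zeta_n\to\mathcal{N}(0,I(\theta_0))$; convergence of the moments of these Gaussian quadratic forms confirms that the limiting covariance is exactly $I(\theta_0)$.

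\emph{The Hessian.} Differentiating $\ell_n$ twice, each entry of $\varphi_n\nabla^2\ell_n(\theta)\varphi_n$ equals $n^{-1}$ times a sum of a log-determinant derivative (e.g.\ $\tfrac12\partial_H^2\log|T_n(H)|$) and quadratic forms $\sigma^{-2}\langle X_n,N_nX_n\rangle$, where $N_n$ is a product of copies of $T_n(H)^{-1}$, $T_n(\partial_H f_H)$ and $T_n(\partial_H^2 f_H)$. Writing each such form as $\operatorname{tr}(T_n(H)N_n)$ plus a centred fluctuation whose variance is $O(n^{-1})$ after the scaling, the matrix converges in $P^n_{\theta_0}$-probability to $-I(\theta_0)$ by the Szeg\H{o} limit theorem $n^{-1}\operatorname{tr}(\prod_j T_n(g_j))\to\tfrac{1}{2\pi}\int_{-\pi}^{\pi}\prod_j g_j$ (valid also when some factors are $T_n(f_H)^{-1}$, via asymptotic equivalence with $T_n(1/f_H)$), together with $n^{-1}\log|T_n(f_H)|\to\tfrac{1}{2\pi}\int_{-\pi}^{\pi}\log f_H$ and its differentiated versions. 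Applying these uniformly on the $O(1/\sqrt{n})$-ball around $\theta_0$, on which $H\mapsto f_H$ is smooth and, on compacts of $(0,1)$, uniformly controlled, yields (ii), hence $r_n(\theta_0,u)\to 0$, which completes the argument; the convergence $(A_n,B_n)\to\mathcal{N}(0,J(H))$ has been established along the way.

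\emph{Main obstacle.} The only genuine difficulty lies in the Toeplitz analysis just invoked, because for $H\ne\tfrac12$ the symbol is singular at the origin, $f_H(\lambda)\sim C_H|\lambda|^{1-2H}$, so $1/f_H$ blows up or vanishes there and $\partial_H\log f_H(\lambda)\sim\mathrm{const}-2\log|\lambda|$ is unbounded. One must therefore (1) justify replacing $T_n(f_H)^{-1}$ by $T_n(1/f_H)$ inside the traces and check that the Szeg\H{o} limits persist for these merely $L^p$ symbols, (2) bound $\|M_n\|_{\mathrm{op}}$ — a bound of order $\log n$, which is what one expects, is more than enough for Lindeberg, since $n^{-1}\operatorname{tr}(M_n^2)$ converges to a finite limit thanks to the square-integrability of $\log|\lambda|$ near the origin — and (3) expand $\log|T_n(H)|$ in $H$ to second order with $o(n)$ remainder. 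These are precisely the technical inputs developed for maximum-likelihood and Whittle estimation of long-memory Gaussian sequences in \cite{Dahlhaus,Dahlhaus2,Lieberman}, which we adapt; the remainder of the proof is bookkeeping.
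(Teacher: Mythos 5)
Your outline is correct and follows essentially the same route as the paper, whose entire proof of this theorem is the citation ``See \cite{Cohen}'': a second-order Taylor expansion of the Gaussian log-likelihood, reduction of the score and Hessian to quadratic forms in $M_n=T_n(H)^{-1/2}T_n(\partial_H f_H)T_n(H)^{-1/2}$, a Lindeberg CLT after diagonalisation, and Szeg\H{o}-type trace asymptotics, and your algebra (the identities for $A_n$, $B_n$, the limiting covariance $J(H)$, and the nondegeneracy of $I(\theta_0)$ via strict Cauchy--Schwarz) all checks out. You correctly isolate the genuinely hard inputs --- the trace limits with $T_n(f_H)^{-1}$ for the singular symbol, the operator-norm bound on $M_n$, and the uniform second-order control of $\log|T_n(H)|$ --- and defer them to \cite{Dahlhaus,Dahlhaus2,Lieberman}, which is exactly where \cite{Cohen} gets them as well.
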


\begin{proof}
See \cite{Cohen}.
\end{proof}

\section{The LAN property in high-frequency observation}
Let $X_n$ be again the observed fractional Gaussian noise
\begin{equation*}
 X_n =
  \left(
   \sigma B^H_{\Delta_n}, 
   \sigma B^H_{2\Delta_n} - \sigma B^H_{\Delta_n},
   \sigma B^H_{3\Delta_n} - \sigma B^H_{2\Delta_n}, \dots,
      \sigma B^H_{n\Delta_n} - \sigma B^H_{(n-1)\Delta_n}
  \right).
\end{equation*}
Here we consider high-frequency asymptotics, which means $\Delta_n \to 0$ as $n\to \infty$.
The parameters  to be estimated are still $H \in (0,1)$ and $\sigma > 0$.
As before, the distribution of
\begin{equation*}
 \frac{1}{\sigma \Delta_n^H} X_n
\end{equation*}
is stationary centered Gaussian and does not depend on $\sigma$ and $\Delta_n$
by the self-similarity property.
Its covariance matrix is  $T_n(H)$ defined in the previous section. 
Denote by $\ell_n(H,\sigma)$ the log-likelihood ratio
\begin{equation*}
 \begin{split}
 \ell_n(H,\sigma) = &
  -\frac{n}{2}\log(2\pi) - nH\log \Delta_n - n\log \sigma \\
  & - \frac{1}{2}\log | T_n(H) | - \frac{1}{2\sigma^2 \Delta_n^{2H}}\langle
  X_n,T_n(H)^{-1} X_n \rangle.
 \end{split}
\end{equation*}
Let
\begin{equation*}
 \begin{split}
 &A_n = \sqrt{n}
  \left\{
\frac{1}{n\sigma^2 \Delta_n^{2H}}\langle
  X_n,T_n(H)^{-1} X_n \rangle -1
		\right\}, \\
  &  B_n =  \frac{1}{\sqrt{n}} \left\{
  \frac{1}{2} \partial_H
  \log | T_n(H) | +
 \frac{1}{2\ \sigma^2 \Delta_n^{2H}}\langle
  X_n,\partial_H\{T_n(H)^{-1}\} X_n \rangle\right\}.
 \end{split}
\end{equation*}
We use the same notation as in the previous section because their distributions are the same.
In particular, 
we have that $(A_n,B_n) \to (A,B)$ in law under $P^n_{(H,\sigma)}$
for a nondegenerate Gaussian random variable $(A,B)$
whose covariance is given by (\ref{AB}). In this setting, the score function is given by
\begin{equation*}
 \nabla \ell_n = 
  \begin{pmatrix}
   \partial_H \ell_n \\
   \partial_\sigma \ell_n 
  \end{pmatrix}
     = \begin{pmatrix}
	A_n \sqrt{n} \log \Delta_n - B_n \sqrt{n}  \\
     A_n \sqrt{n} \sigma^{-1}
       \end{pmatrix}.
\end{equation*}
From this expression, we clearly see that the leading terms of
$\partial_H \ell_n$ and $\partial_\sigma \ell_n$ are linearly dependent, which is exactly the reason why we have a singular Fisher matrix when considering diagonal rate matrices as in \cite{Kawai}. Now, we state the main result of the paper.

 \begin{thm}
 Suppose $\inf_n n\Delta_n > 0$.
Consider a matrix
\begin{equation*}
	      \varphi_n = \varphi_n(H,\sigma) = \begin{pmatrix}
			   \alpha_n & \hat{\alpha}_n \\
	      \beta_n & \hat{\beta}_n
			  \end{pmatrix}
\end{equation*}
with the following properties :
\begin{enumerate}
 \item $|\varphi_n| = \alpha_n \hat{\beta}_n - \hat{\alpha}_n\beta_n \neq 0$.
 \item $\alpha_n \sqrt{n} \to \alpha$ for some $\alpha \geq 0$.
 \item $\hat{\alpha}_n \sqrt{n} \to \hat{\alpha}$ for some $\hat{\alpha} \geq 0$.
 \item $\gamma_n := \alpha_n \sqrt{n} \log \Delta_n + \beta_n\sqrt{n}\sigma^{-1} \to \gamma$ for some $\gamma \in \mathbb{R}$.
 \item $\hat{\gamma}_n := \hat\alpha_n \sqrt{n} \log \Delta_n + \hat\beta_n\sqrt{n}\sigma^{-1} \to \hat{\gamma}$ for some $\hat{\gamma} \in \mathbb{R}$.
 \item $\alpha \hat\gamma - \hat\alpha \gamma \neq 0$.
\end{enumerate}
 Then, the family $\left\{P^n_{(H,\sigma)} ; (H,\sigma) \in (0,1) \times (0,\infty)\right\}$ is LAN at any $(H,\sigma)$ for $\varphi_n(H,\sigma)$ defined previously and
 \begin{equation*}
  I(H,\sigma) =
   \begin{pmatrix}
 \gamma & - \alpha \\ \hat{\gamma} & -\hat{\alpha} 
\end{pmatrix}
 J(H)    \begin{pmatrix}
    \gamma & \hat\gamma  \\   -\alpha &  - \hat\alpha 
   \end{pmatrix}, 
 \end{equation*}
 where $J(H)$ is defined by (\ref{AB}).
 \end{thm}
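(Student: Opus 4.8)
The plan is to reduce everything to the large-sample LAN expansion already recorded in the first theorem via a Taylor expansion of $\ell_n$ around $\theta_0 = (H,\sigma)$, being careful to track how the high-frequency factors $\log\Delta_n$ enter. First I would write, for $u = (u_1,u_2) \in \mathbb{R}^2$ and $\theta_n(u) := \theta_0 + \varphi_n u$,
\begin{equation*}
\log Z_n(u) = \ell_n(\theta_n(u)) - \ell_n(\theta_0)
 = \langle \varphi_n u, \nabla \ell_n(\theta_0)\rangle
   + \tfrac12 \langle \varphi_n u, \nabla^2 \ell_n(\tilde\theta_n)\, \varphi_n u\rangle,
\end{equation*}
for some $\tilde\theta_n$ on the segment between $\theta_0$ and $\theta_n(u)$. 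Since $\varphi_n \to 0$ (by properties 2--5, all entries are $O(1/\sqrt n)$ up to the $\log\Delta_n$ factor, and $\inf_n n\Delta_n>0$ forces $\log\Delta_n = O(\log n)$, so $\alpha_n\log\Delta_n, \beta_n \to 0$ etc.), the perturbation $\theta_n(u)$ stays in $\Theta$ for $n$ large. The linear term is the source of $\zeta_n$: using the score expression $\nabla\ell_n(\theta_0) = (A_n\sqrt n\log\Delta_n - B_n\sqrt n,\ A_n\sqrt n\sigma^{-1})^{\mathsf T}$, a direct computation gives
\begin{equation*}
\langle \varphi_n u, \nabla\ell_n(\theta_0)\rangle
 = u_1(\gamma_n A_n - \alpha_n\sqrt n\, B_n) + u_2(\hat\gamma_n A_n - \hat\alpha_n\sqrt n\, B_n),
\end{equation*}
which, by properties 2--5 and $(A_n,B_n)\to(A,B)$, converges in law to $\langle u,\zeta\rangle$ with $\zeta = (\gamma A - \alpha B,\ \hat\gamma A - \hat\alpha B)^{\mathsf T}$. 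Computing the covariance of $\zeta$ from $J(H)$ gives exactly $\langle u,\zeta\rangle$ distributed as $\mathcal{N}(0,\langle I(H,\sigma)u,u\rangle)$ with the stated $I$; this is just the matrix identity $\mathrm{Cov}(\zeta) = \begin{pmatrix}\gamma & -\alpha\\ \hat\gamma & -\hat\alpha\end{pmatrix} J(H) \begin{pmatrix}\gamma & \hat\gamma\\ -\alpha & -\hat\alpha\end{pmatrix}$, and nondegeneracy of $I$ follows from property 6 together with nondegeneracy of $J(H)$.

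Next I would handle the quadratic term. The natural approach is to show $\nabla^2\ell_n(\theta_0)$, suitably normalized, converges to a deterministic limit and that $\varphi_n^{\mathsf T}\nabla^2\ell_n(\tilde\theta_n)\varphi_n \to -I(H,\sigma)$ in probability. From the explicit form of $\ell_n$, the entries of $\nabla^2\ell_n$ are combinations of $\partial_H^j\log|T_n(H)|$, $\partial_H^j\langle X_n, \partial_H^k\{T_n(H)^{-1}\}X_n\rangle$, $\log\Delta_n$, and $\sigma$; the large-sample theory underlying the first theorem (Cohen--Gamboa et al., via the spectral density $f_H$ and standard Toeplitz-matrix asymptotics) gives $\frac1n\partial_H\log|T_n(H)| \to \frac{1}{2\pi}\int_{-\pi}^\pi\partial_H\log f_H$, $\frac1n\partial_H^2\log|T_n(H)| \to$ a finite limit, and analogous convergences for the quadratic forms after dividing by $\sigma^2\Delta_n^{2H}$ and $n$. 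The key point is that the $\Delta_n^{-2H}$ prefactor exactly cancels the $\Delta_n^{2H}$ scaling inside $X_n$, so the normalized quadratic forms have the same limits as in the fixed-$\Delta$ case. Multiplying by $\varphi_n$ on both sides and collecting the $\log\Delta_n$ factors into $\gamma_n,\hat\gamma_n$, the same bookkeeping as in the linear term yields $\varphi_n^{\mathsf T}\nabla^2\ell_n(\theta_0)\varphi_n \to -I(H,\sigma)$. A continuity/uniformity argument over the shrinking neighborhood $\{\tilde\theta_n\}$ then gives $r_n(\theta_0,u)\to 0$.

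The main obstacle I expect is the \emph{uniform control of the remainder} $r_n(\theta_0,u) = \tfrac12\langle\varphi_n u, \nabla^2\ell_n(\tilde\theta_n)\varphi_n u\rangle - \tfrac12\langle\varphi_n u,\nabla^2\ell_n(\theta_0)\varphi_n u\rangle$ over the data-dependent point $\tilde\theta_n$: one must show that the third derivatives $\partial_H^3\log|T_n(H)|$ and $\partial_H^3\langle X_n,\partial_H^k\{T_n(H)^{-1}\}X_n\rangle$ grow at most like $n$ (times powers of $\log\Delta_n$) \emph{uniformly} for $H$ in a neighborhood of $H_0$, and that $\Delta_n^{2(H-H_0)}$ stays bounded when $|H-H_0| = O(\alpha_n + |\varphi_n|\text{-entries})\cdot|u| = O(\log n/\sqrt n)\cdot|u|$ — indeed $\Delta_n^{2(H-H_0)} = \exp(2(H-H_0)\log\Delta_n) = \exp(O((\log n)^2/\sqrt n)) \to 1$ because $\inf_n n\Delta_n>0$ bounds $|\log\Delta_n|$ by $O(\log n)$. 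These estimates on derivatives of Toeplitz determinants and quadratic forms are of the type established in the large-sample literature (e.g. Dahlhaus, Lieberman), and the bulk of the technical work is in adapting them with explicit tracking of the extra $\log\Delta_n$ powers; once they are in place, the convergence $r_n \to 0$ in $P^n_{\theta_0}$-probability, and hence the LAN property with the claimed $I(H,\sigma)$, follows.
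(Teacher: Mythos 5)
Your proposal is correct and follows essentially the same route as the paper: a Taylor expansion of $\ell_n$ around $\theta_0$, identification of $\varphi_n^{\ast}\nabla\ell_n(\theta_0)$ with linear combinations of $(A_n,B_n)$ converging to the stated Gaussian limit, convergence of $\varphi_n^{\ast}\nabla^2\ell_n\varphi_n$ to $-I(H,\sigma)$ via the Toeplitz-form limits, and control of the remainder through uniform bounds on third derivatives over the shrinking ball with explicit tracking of $\log\Delta_n$ (which is where $\inf_n n\Delta_n>0$ enters, exactly as in the paper). The only cosmetic difference is that you use the Lagrange form of the second-order remainder while the paper writes the third-order expansion at $\theta_0$ and invokes the uniform $O_p(n^{1/2-\epsilon}|\log\Delta_n|)$ bounds from Lemma~3.7 of the Cohen et al.\ reference, which is the same technical input you identify.
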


 \begin{proof}
Let $\theta_0 = (H_0,\sigma_0)$ and $u \in \mathbb{R}^2$.
Let $ \bar{B}(\theta_0,r) = \{\theta \in (0,1)\times (0,\infty) ; |\theta-\theta_0|\leq r\}$ for $r > 0$.
By Taylor's formula,
\begin{equation*}
 \ell_n(\theta_0 + \varphi_n u) -\ell_n(\theta_0)
  = \langle \varphi_n^\ast \nabla \ell_n(\theta_0),  u \rangle +  \frac{1}{2}\langle u, \varphi_n^\ast \nabla^2 \ell_n(\theta_0) \varphi_n u \rangle + r_n,
\end{equation*}
where
\begin{equation*}
 |r_n| \leq \frac{1}{6} 
  |\varphi_n u|\  |u|^2  \max\{ | \varphi_n^\ast\nabla^3 \ell_n(\theta)\varphi_n | ; \theta \in \bar{B}(\theta_0,|\varphi_n u|)\}.
\end{equation*}
Step 1). Let us show that $\varphi_n^\ast \nabla \ell_n(\theta_0) \to \mathcal{N}(0,I(\theta_0))$.
Note that
\begin{equation*}
 \varphi_n^\ast \nabla \ell_n
     = \begin{pmatrix}
A_n \gamma_n - B_n \alpha_n \sqrt{n}
	\\
     A_n \hat\gamma_n - B_n \hat\alpha_n \sqrt{n}
       \end{pmatrix}.
\end{equation*}
Therefore, it converges in law to
\begin{equation*}
\begin{pmatrix}
A \gamma - B \alpha
	\\
 A \hat\gamma - B\hat\alpha
\end{pmatrix} =
\begin{pmatrix}
 \gamma & - \alpha \\ \hat{\gamma} & -\hat{\alpha} 
\end{pmatrix}
  \begin{pmatrix}
   A \\ B
  \end{pmatrix}
,
\end{equation*}
which is Gaussian.

\noindent
Step 2). Here we compute
\begin{equation*}
 \varphi_n^* \nabla^2 \ell_n \varphi_n.
\end{equation*}
Let
\begin{equation*}
 \begin{split}
& C_n = \frac{1}{n\sigma^2 \Delta_n^{2H}}\langle
  X_n, T_n(H)^{-1}X_n \rangle, \\
&  D_n = \frac{1}{n\sigma^2 \Delta_n^{2H}}\langle
  X_n, \partial_H \{T_n(H)^{-1}\}X_n \rangle, \\
  &
 E_n =  \frac{1}{n} \left\{
  \frac{1}{2} \partial_H^2
  \log | T_n(H) | +
 \frac{1}{2\ \sigma^2 \Delta_n^{2H}}\langle
  X_n,\partial_H^2\{T_n(H)^{-1}\} X_n \rangle\right\}.  
 \end{split}
\end{equation*}
Note that
\begin{equation*}
 \partial_H
  \log | T_n(H) | = - \mathrm{tr}(\partial_H\{T_n(H)^{-1}\}T_n(H))
\end{equation*}
and so,
\begin{equation*}
\partial_H^2
 \log | T_n(H) | =  -\mathrm{tr}(\partial_H^2T_n(H)^{-1}T_n(H))
 + \mathrm{tr}(T_n(H)^{-1}\partial_HT_n(H) T_n(H)^{-1}\partial_HT_n(H)). 
\end{equation*}
Then, the $(1,1)$ element of $\nabla^2 \ell_n \varphi_n$ is
\begin{equation*}
 \begin{split}
& \alpha_n
  \left(
\sqrt{n} \log \Delta_n \partial_HA_n - \sqrt{n}\partial_HB_n
	 \right)
  + \beta_n \frac{\sqrt{n}}{\sigma}\partial_HA_n   \\
  & = \gamma_n \partial_HA_n - \alpha_n\sqrt{n} \partial_HB_n \\
  &= \gamma_n \sqrt{n}\left( -2 C_n \log \Delta_n  + D_n\right)
  - \alpha_n n \left(
  E_n - D_n \log \Delta_n
  \right)
 \end{split}
\end{equation*}
and the $(1,2)$ element is
\begin{equation*}
 \begin{split}
& \alpha_n
  \frac{\sqrt{n}}{\sigma}\partial_HA_n + \beta_n \left(
-\frac{\sqrt{n}}{\sigma^2} + \frac{\sqrt{n}}{\sigma}\partial_\sigma A_n
  \right) \\
  &=
  -2 \frac{\sqrt{n}}{\sigma}C_n \gamma_n
  + \frac{n}{\sigma} D_n\alpha_n - \beta_n\frac{\sqrt{n}}{\sigma^2}.
 \end{split}
\end{equation*}
It follows then that the $(1,1)$ element of
$ \varphi_n^\ast \nabla^2 \ell_n \varphi_n $ is
\begin{equation*}
 -2C_n \gamma_n^2 + 2D_n \gamma_n \alpha_n \sqrt{n}
  -\alpha_n^2nE_n - \beta_n^2\frac{\sqrt{n}}{\sigma^2}.
\end{equation*}
From this, it is clear that
the $(2,2)$ element of
$ \varphi_n^\ast \nabla^2 \ell_n \varphi_n$ is
\begin{equation*}
 -2C_n \hat\gamma_n^2 + 2D_n \hat\gamma_n \hat\alpha_n \sqrt{n}
  -\hat\alpha_n^2nE_n - \hat\beta_n^2\frac{\sqrt{n}}{\sigma^2}.
\end{equation*}
Also it is already not difficult to see that
the $(1,2)$ element of
$ \varphi_n^\ast \nabla^2 \ell_n \varphi_n$ is
\begin{equation*}
 -2C_n \gamma_n \hat\gamma_n + D_n( \hat\gamma_n \alpha_n \sqrt{n} +
   \gamma_n \hat\alpha_n \sqrt{n})
  -\alpha_n\hat\alpha_nnE_n - \beta_n \hat\beta_n\frac{\sqrt{n}}{\sigma^2}.
\end{equation*}
It is clear that $nC_n \sim \chi^2(n)$ and so, $C_n \to 1$.
By the same argument as the proof of Lemma~3.5 of \cite{Cohen}, we
have
\begin{equation*}
 D_n \to - \frac{1}{2\pi} \int_{-\pi}^{\pi} \partial_H \log f_H(\lambda)\mathrm{d}\lambda = E[AB]
\end{equation*}
and
\begin{equation*}
 E_n \to  \frac{1}{4\pi} \int_{-\pi}^{\pi} |\partial_H \log f_H(\lambda)|^2 \mathrm{d}\lambda = E[B^2].
\end{equation*}
Therefore, 
\begin{equation*}
 \begin{split}
& \varphi_n^\ast \nabla^2 \ell_n \varphi_n \\
&       \to - \begin{pmatrix}
2\gamma^2 - 2\gamma \alpha E[AB] + \alpha^2 E[B^2]
	    &
	      2\gamma\hat{\gamma} - (\gamma \hat\alpha + \hat\gamma \alpha) E[AB] + \alpha\hat{\alpha} E[B^2]  \\
	      2\gamma\hat{\gamma} - (\gamma \hat\alpha + \hat\gamma \alpha) E[AB] + \alpha\hat{\alpha} E[B^2] &
	      2\hat\gamma^2 - 2\hat\gamma \hat\alpha E[AB] + \hat\alpha^2 E[B^2] 
	   \end{pmatrix}. 
 \end{split}
\end{equation*}
This coincides with $ - I(H,\sigma)$ because $E[A^2] = 2$.
\\
Step 3) It remains to show $r_n \to 0$.
From the computation in Step 2, we deduce that 
the tensor $-\varphi_n^\ast  \nabla^3 \ell_n \varphi_n$ consists of the vectors
\begin{equation*}
 \begin{split}
  & 2 \nabla C_n \gamma_n^2 - 2\gamma_n \alpha_n\sqrt{n} \nabla D_n + \alpha_n n
   \nabla E_n,
  \\
  & 2\nabla C_n \gamma_n\hat{\gamma}_n - (\gamma_n \hat\alpha_n\sqrt{n} + \hat\gamma_n \alpha_n\sqrt{n}) \nabla D_n + \alpha_n\hat{\alpha}_nn \nabla E_n,  \\
  & 2\nabla C_n \hat\gamma_n^2 - 2\hat\gamma_n \hat\alpha_n\sqrt{n} \nabla D_n + \hat\alpha_n^2n\nabla E_n.
 \end{split}
\end{equation*}
By the same argument as the proof of Lemma~3.7 of \cite{Cohen},
we have that there exists $\epsilon > 0$ such that
$\nabla C_n, \nabla D_n $ and $\nabla E_n$ are 
 of $O_p(n^{1/2-\epsilon}|\log \Delta_n|)$ uniformly in $\theta \in\bar{B}(\theta_0,\epsilon)$.
 On the other hand,
\begin{equation*}
 \begin{split}
  &     n(\alpha_n^2 + \hat\alpha_n^2)\to \alpha^2 + \hat\alpha^2, \\
  & \frac{n}{|\log \Delta_n| }(\beta_n^2 + \hat{\beta}_n^2)\to \sigma^2(\alpha^2 + \hat\alpha^2),
 \end{split}
\end{equation*} which implies that
 $|\varphi_n u| = O(\sqrt{|\log |\Delta_n|/n})$.
 Since $\inf_n n \Delta_n > 0$ by the assumption,  we conclude that $r_n \to 0$.

\end{proof}

Several examples can be elicited. A symmetric matrix for rate $\varphi_n $ is
\begin{equation*}
	      \varphi_n = \begin{pmatrix}
\frac{1}{\sqrt{n}\log \Delta_n} & \frac{1}{\sqrt{n}}
			   \\
			   \frac{1}{\sqrt{n}} & - \frac{\sigma \log \Delta_n}{\sqrt{n}}
	      
			  \end{pmatrix},
\end{equation*}
for which $\alpha = 0$, $\hat{\alpha} = 1$, $\gamma = 1 + \sigma^{-1}$
and $\hat{\gamma} = 0$.
Another example is that
\begin{equation*}
	      \varphi_n = \begin{pmatrix}
			   \frac{1}{\sqrt{n}} & 
			   \frac{1}{\sqrt{n}} \\
			   \frac{\sigma}{\sqrt{n}}
			   \left(\gamma -\log \Delta_n\right) & \frac{\sigma}{\sqrt{n}}
			   \left(\hat{\gamma} -\log \Delta_n\right)
			  \end{pmatrix}
\end{equation*}
for $(\gamma,\hat\gamma)$  with $\gamma \neq \hat\gamma$, for which
$\alpha = \hat{\alpha} = 1$.

Two other examples of rate matrix will be used in the following section, namely
\begin{equation*}
	      \varphi_n = \frac{1}{\sqrt{n}}\begin{pmatrix}

					    1			  &
					    0 \\ - \sigma \log \Delta_n & 1
	      
			  \end{pmatrix},
\end{equation*}
which gives $\alpha = 1$, $\hat{\alpha}=0$, $\gamma=0$ and
 $\hat{\gamma} = \sigma^{-1}$ and
 \begin{equation*}
	      \varphi_n = \frac{1}{\sqrt{n}}\begin{pmatrix}

					   \frac{1}{\log \Delta_n}		  & 
					    1 \\ 0  &  -\sigma \log \Delta_n
	      
			  \end{pmatrix},
\end{equation*}
which gives $\alpha = 0$, $\hat{\alpha}=1$, $\gamma=1$ and
 $\hat{\gamma} = 0$.  Remark that all the examples are non-diagonal rate matrix depending on the parameter $\sigma$.

\section{Efficient rates of estimation and optimality of ML estimtors}

\subsection{The efficient estimation rate for $H$}
As the rate matrix, we can take
\begin{equation*}
	      \varphi_n = \frac{1}{\sqrt{n}}\begin{pmatrix}

					    1			  &
					    0 \\ - \sigma \log \Delta_n & 1
	      
			  \end{pmatrix},
\end{equation*}
which gives $\alpha = 1$, $\hat{\alpha}=0$, $\gamma=0$ and
 $\hat{\gamma} = \sigma^{-1}$.  It is worth mentioning that the rate matrix is not diagonal and depends on the parameter $\sigma$.
By Theorem~\ref{thm:minimax}, the LAN property implies that
 \begin{equation*}
\liminf_{n\to \infty}  E_{(H,\sigma)}\left[l\left(\varphi_n^{-1}
\begin{pmatrix}
 \hat{H}_n-H \\ \hat{\sigma}_n-\sigma
\end{pmatrix}
			     \right)\right] \geq
E\left[
  l \left(
\begin{pmatrix}
 E[B^2] & - E[AB]/\sigma \\
 - E[AB]/\sigma   & 2/\sigma^2
\end{pmatrix}^{-1/2}  N
    \right)
 \right]
 \end{equation*}
for any loss function $l$ satisfying the condition given in Theorem~\ref{thm:minimax},
 where $N \sim \mathcal{N}(0,I_2)$.
 Since
 \begin{equation*}
  \varphi_n^{ -1}
	      = \sqrt{n}\begin{pmatrix}
			 1 &  0 \\
			 \sigma \log \Delta_n & 1
			\end{pmatrix},
 \end{equation*}
 we obtain the asymptotic lower bound of
 \begin{equation*}
n  E_{(H,\sigma)}[(\hat{H}_n-H)^2]
 \end{equation*}
by taking $l(x,y) = x^2$.
 This means that the efficient rate of estimation for $H$ is $\sqrt{n}$ when both $H$ and $\sigma$ are unknown. 
Note that when $\sigma$ is known, the efficient rate for $H$ is $\sqrt{n}\log \Delta_n$, which follows from, say, \cite{Kawai}.

 \subsection{The efficient estimation rate for $\sigma$}
 As the rate matrix, we can take
\begin{equation*}
	      \varphi_n = \frac{1}{\sqrt{n}}\begin{pmatrix}

					   \frac{1}{\log \Delta_n}		  & 
					    1 \\ 0  &  -\sigma \log \Delta_n
	      
			  \end{pmatrix},
\end{equation*}
which gives $\alpha = 0$, $\hat{\alpha}=1$, $\gamma=1$ and
 $\hat{\gamma} = 0$.   It is worth repeating that the rate matrix is not diagonal and depends on the parameter $\sigma$.
Again by Theorem~\ref{thm:minimax}, the LAN property implies that
 \begin{equation*}
\liminf_{n\to \infty}  E_{(H,\sigma)}\left[l\left(\varphi_n^{-1}
\begin{pmatrix}
 \hat{H}_n-H \\ \hat{\sigma}_n-\sigma
\end{pmatrix}
			     \right)\right] \geq
E\left[
  l \left(
\begin{pmatrix}
 2 & - E[AB] \\
 - E[AB]  & E[B^2]
\end{pmatrix}^{-1/2}N
    \right)
 \right]
 \end{equation*}
for any loss function $l$ satisfying the condition given in Theorem~\ref{thm:minimax},
 where $N \sim \mathcal{N}(0,I_2)$.
 Since
 \begin{equation*}
  \varphi_n^{ -1}
	      = -\frac{\sqrt{n}}{\sigma}\begin{pmatrix}
					 -\sigma \log \Delta_n &  -1\\
			 0 & \frac{1}{\log \Delta_n}
			\end{pmatrix},
 \end{equation*}
 we obtain the asymptotic lower bound of
 \begin{equation*}
\frac{n}{(\log \Delta_n)^2}  E_{(H,\sigma)}[(\hat{\sigma}_n-\sigma)^2]
 \end{equation*}
by taking $l(x,y) = y^2$.
 This means that the efficient rate of estimation for $\sigma$ is $\sqrt{n}/|\log \Delta_n|$ when both $H$ and $\sigma$ are unknown. 
Note that when $H$ is known, the efficient rate for $\sigma$ is $\sqrt{n}$, which follows from, say, \cite{Kawai}.

\end{document}